\documentclass[pdflatex,sn-mathphys-num]{sn-jnl}
\usepackage{amsmath,amssymb,amsthm,amsfonts}
\usepackage{booktabs}
\usepackage{multirow}
\usepackage{array}
\usepackage{graphicx}
\usepackage{enumitem}
\usepackage{placeins}
\usepackage{bm}
\usepackage{flafter}

\theoremstyle{thmstyleone}

\newtheorem{proposition}{Proposition}

\theoremstyle{thmstyletwo}

\theoremstyle{thmstylethree}

\begin{document}

\title[Sparse Corner-Difference MILP for Minimum Square Tiling]{A Sparse Corner-Difference MILP for Minimum Square Tiling}

\author[1,2]{\fnm{Zhuo} \sur{Yu}}
\author*[1,2]{\fnm{Yuan} \sur{Wang}}\email{wangyuan@cuhk.edu.cn}
\author[1]{\fnm{Zhuo} \sur{Liu}}

\affil[1]{\orgdiv{\orgname{Shenzhen Research Institute of Big Data}}, \orgaddress{\city{Shenzhen}, \country{P.R. China}}}

\affil[2]{\orgdiv{\orgname{The Chinese University of Hong Kong}}, \orgaddress{\city{Shenzhen}, \country{P.R. China}}}

\abstract{
We study the problem of tiling an $N\times N$ square with axis-aligned squares whose side lengths are integers strictly less than $N$, with the objective of minimizing the number of tiles.
The standard placement-based exact-cover MILP contains
$\Theta(N^5)$ placement-to-cell nonzero coefficients and is also
affected by the dihedral symmetry of the square domain. We
introduce a Corner-Difference MILP (CD-MILP) that represents each
selected square by at most four signed corner coefficients.
Two-dimensional prefix reconstruction shows that the resulting
constraints are equivalent to the original cell-cover equations,
while reducing the coverage-related nonzero count to
$\Theta(N^3)$. We also introduce lightweight corner-ordering
constraints that retain at least one representative from every
$D_4$ orbit, although ties may leave residual symmetry.
Computational experiments on nine prime-size instances compare
the baseline, CD-MILP, the baseline with corner ordering, and
their combination using Gurobi and COPT. Under the stated
experimental settings, the combined formulation solves eight
instances with Gurobi and seven with COPT, compared with four and
five, respectively, for the baseline. Its average presolved
matrixs contain approximately $10^5$ nonzero coefficients on average,
compared with approximately $10^7$ for the baseline.
}

\keywords{square tiling; corner-difference formulation; exact-cover MILP; symmetry breaking}

\pacs[MSC Classification]{90C11, 90C10, 52C20, 05B45}

\maketitle

\section{Introduction}
\label{sec:intro}

We consider the problem of tiling an \(N\times N\) square with smaller, axis-aligned squares of integer side lengths, aiming to minimize the total number of tiles. Throughout the paper, every tile side length is required to be strictly smaller than \(N\), and repeated side lengths are allowed. This classical geometric covering problem is related to the literature on squared rectangles and ``squaring the square'', but the minimum tiling variant studied here has a different optimization objective because it asks for the smallest number of squares rather than a dissection with distinct sizes. It is also a representative exact-cover problem in geometric integer programming: local placement decisions must collectively satisfy a global cell-by-cell coverage condition.

The study of squared rectangles dates back to Dehn \cite{dehn1903} and the famous work of Brooks, Smith, Stone, and Tutte \cite{brooks1940}, who established a connection between perfect squared rectangles and electrical networks. Subsequently, Kurz \cite{kurz2012} introduced integer linear programming (ILP) models to compute optimal tilings for moderate sizes. More recently, Monaci and dos Santos \cite{monaci2018} developed a heuristic algorithm for the minimum tiling of rectangles by squares, and the MIT CompGeom Group \cite{mit2023} characterized when a rectangle can be tiled using only squares of side at least~2. These works place square tiling at the intersection of discrete geometry, exact-cover modeling, and computational optimization.

Kurz \cite{kurz2012} proposed an ILP formulation that uses binary placement variables and exact-cover constraints that require each unit cell to be covered exactly once. This natural formulation has \(\Theta(N^3)\) candidate placements and \(\Theta(N^2)\) cell-cover constraints, but the placement-to-cell incidence matrix contains \(\Theta(N^5)\) nonzero entries. The difficulty of the standard formulation is therefore not merely the number of candidate placements. Rather, the incidence matrix explicitly records every cell affected by every placement, so large squares create dense columns and expensive LP relaxations. In addition, the dihedral symmetry of the square creates equivalent solutions that can lead to a redundant branch-and-bound search.

Our key observation is that the two-dimensional discrete derivative of an axis-aligned square indicator is supported only on its four corners. While an \(h\times h\) square covers \(h^2\) cells, its two-dimensional difference has only four nonzero corner events. This suggests replacing dense cell incidence with signed local updates and reconstructing coverage through prefix sums.

We introduce a Corner-Difference MILP (CD-MILP) for the minimum square tiling problem. The proposed model should be interpreted as a sparse equivalent reformulation: it changes the linear-algebra structure exposed to the solver, not the underlying set of placement decisions, objective, or exact-cover semantics. Since the square domain also induces \(D_4\)-equivalent tilings, we complement the sparse representation with inexpensive corner-ordering inequalities based on the side lengths of the four corner-incident squares.

\textbf{Relation to formulation and symmetry-handling literature.}
The use of cumulative or difference-based representations is a standard modeling idea in optimization and related computational settings. Compact extended formulations often replace dense global relations by auxiliary variables and sparse linking constraints \cite{lancia2017,vielma2015}; inventory-balance models provide a classical one-dimensional example in which cumulative effects are represented through local flow-balance equations \cite{pochet2006}. Prefix-sum and summed-area representations are also widely used for efficient cumulative queries in computational applications \cite{crow1984,viola2001}. Our contribution is not the difference-array idea in isolation, but its application to the placement-based exact-cover MILP for square tiling: each selected square contributes four signed corner events, and exact cell coverage is recovered by two-dimensional prefix sums.

The closest prior formulation to ours is the placement-based exact-cover ILP used in computational studies of square tiling \cite{kurz2012}. That baseline provides the placement space, objective, and exact-cover semantics. The sparse corner-difference reformulation and the corner-ordering inequalities studied here are formulation-level modifications to that baseline rather than a new relaxation or a heuristic search procedure.

Symmetry handling is also a well-studied topic in integer programming, including orbitopes, orbital branching, and other symmetry-exploitation methods \cite{kaibel2008,margot2010,ostrowski2011,fischetti2017}. The corner-ordering inequalities used here should be viewed as lightweight problem-specific symmetry-breaking constraints. They do not attempt to describe a full orbitopal formulation or eliminate every symmetric representative. Instead, they use the side lengths of the four corner-incident squares to keep at least one representative from each \(D_4\) orbit while adding only \(O(N)\) nonzero coefficients. In this paper, we make three contributions targeting these two bottlenecks.

\begin{enumerate}
    \item \textbf{Corner-Difference MILP.} We replace the dense cell-incidence constraints by four signed corner-difference events and a prefix-sum reconstruction, reducing the number of placement-related nonzeros from \(\Theta(N^5)\) to \(\Theta(N^3)\) while preserving the feasible set in the original placement variables.
    
    \item \textbf{Corner-ordering symmetry breaking.} We add corner-incidence constraints that reduce dihedral symmetry by selecting canonical representatives of symmetric tilings, thereby pruning redundant branches without removing all representatives of any symmetry orbit.
    
    \item \textbf{Cross-solver empirical validation.} We evaluate the reformulation, the symmetry-breaking constraints, and their combination using \textsc{Gurobi} and \textsc{COPT}. On nine prime-size benchmark instances, CD-MILP with corner ordering solves eight instances with \textsc{Gurobi} and seven with \textsc{COPT}, compared with four and five for the exact-cover baseline.
\end{enumerate}

The remainder of the paper is organized as follows. Section~\ref{sec:orig} introduces the standard placement-based exact-cover MILP formulation and identifies the dense placement-to-cell incidence structure that motivates our reformulation. Section~\ref{sec:2ddiff} presents the proposed Corner-Difference MILP, including the four-corner difference representation, the two-dimensional prefix-sum reconstruction, the projected equivalence with the exact-cover model, and the resulting sparsity reduction. Section~\ref{sec:symmetry} analyzes the dihedral symmetries of the square tiling problem and introduces the corner-ordering inequalities used for lightweight symmetry breaking. Section~\ref{sec:num} reports the computational experiments with \textsc{Gurobi} and \textsc{COPT}, comparing the exact-cover baseline, CD-MILP, the symmetry-enhanced baseline, and their combination. Section~\ref{sec:conclu} concludes the paper and discusses possible extensions. Additional ablation results, model-size statistics, and representative optimal tilings are provided in the appendices.

\section{Exact-Cover MILP Formulation}
\label{sec:orig}
For reference, Table~\ref{tab:notation} summarizes the notation used throughout the paper.

\begin{table}[!htbp]
\centering
\caption{Notation}
\label{tab:notation}
\begin{tabular}{ll}
\toprule
Symbol & Description \\
\midrule
\textit{Sets and indices}\\
$N$ & side length of the large square \\
$h$ & side length of a candidate tile \\
$(i,j)$ & upper-left cell coordinate of a placement \\
$(u,v)$ & unit-cell index \\
$(p,q)$ & grid-point / difference-array index \\
$\mathcal P$ & set of feasible placements \\
\textit{Decision variables}\\
$x_{i,j,h}$ & binary placement variable \\
$d_{p,q}$ & 2D difference event variable \\
$c_{u,v}$ & reconstructed cumulative coverage \\
$\alpha^{p,q}_{i,j,h}$ & signed corner coefficient \\
$H^{\mathrm{TL}},H^{\mathrm{TR}},H^{\mathrm{BL}},H^{\mathrm{BR}}$ & corner-incident square side lengths \\
\bottomrule
\end{tabular}
\end{table}
The baseline model is the standard placement-based exact-cover MILP used in prior ILP treatments of square tiling \cite{kurz2012}. Let $N\in\mathbb Z_{\ge 2}$ denote the side length of the large square. We index unit cells by $(u,v)$ for $1\le u,v\le N$ and grid points, used later for difference events, by $(p,q)$ for $1\le p,q\le N+1$. Smaller squares have integer side lengths
$h \in \{1,2,\dots,N-1\}$. A square of size $h$ placed at upper-left coordinate $(i,j)$ must satisfy
$1 \le i \le N-h+1$, $1 \le j \le N-h+1$. Define the placement index set
\[
\mathcal P = \{(i,j,h) \mid 1\le h\le N-1,\; 1\le i,j\le N-h+1\}.
\]

For each placement $(i,j,h)\in\mathcal P$, define the binary decision variable
\[
x_{i,j,h} = 
\begin{cases}
1, & \text{if an } h\times h \text{ square is placed at }(i,j),\\
0, & \text{otherwise}.
\end{cases}
\qquad (i,j,h)\in\mathcal P.
\]

The objective is to minimize the number of placed squares:
\begin{equation}
\min \sum_{(i,j,h)\in\mathcal P} x_{i,j,h} \label{eq:orig_obj}
\end{equation}

The exact-cover constraints require each unit cell $(u,v)$ to be covered exactly once:
\begin{equation}
\sum_{\substack{(i,j,h)\in\mathcal P:\\ i\le u\le i+h-1,\; j\le v\le j+h-1}} x_{i,j,h} = 1,\qquad \forall u,v\in\{1,\dots,N\}. \label{eq:orig_cover}
\end{equation}

Finally, the placement variables are binary:
\begin{equation}
x_{i,j,h}\in\{0,1\},\qquad \forall (i,j,h)\in\mathcal P. \label{eq:orig_binary}
\end{equation}

The exact-cover formulation above is a natural baseline because it directly encodes the geometric meaning of a tiling: every selected square contributes coverage to each cell in its support, and every cell must receive a total coverage of one. Its drawback is not semantic but algebraic. The model represents the effect of each selected square by explicitly listing all cells it covers. Consequently, a square of side length $h$ contributes $h^2$ nonzero coefficients to the cell-cover constraints, and the resulting placement-to-cell incidence matrix has $\Theta(N^5)$ nonzeros. The following section keeps the same placement variables, objective, and exact-cover semantics, but replaces the dense incidence matrix with a sparse corner-difference representation followed by prefix-sum reconstruction.

\section{Corner-Difference MILP Formulation}
\label{sec:2ddiff}

The dense part of the exact-cover model comes from representing each selected square by all cells in its support. A square of side length $h$ appears in $h^2$ cell-cover constraints, even though its support is an axis-aligned rectangle determined by four grid points. The formulation below exploits the fact that the two-dimensional difference of a square indicator has only four corner events. It keeps the original placement decisions, but replaces dense placement-to-cell incidence by sparse four-corner updates followed by a two-dimensional prefix reconstruction.

\subsection{The complete CD-MILP formulation}
\label{sec:reform_model}

The reformulation uses the same placement variables $x_{i,j,h}$ as the exact-cover model. It also introduces integer difference variables $d_{p,q}$ for $1\le p,q\le N+1$ and integer cumulative coverage variables $c_{u,v}$ for $0\le u,v\le N$. The boundary variables $c_{0,v}$ for $0\le v\le N$ and $c_{u,0}$ for $1\le u\le N$ are fixed to zero so that the prefix recurrence has a well-defined starting row and column.

For each placement $(i,j,h)\in\mathcal P$, define the corner coefficients
\begin{equation}
\alpha^{p,q}_{i,j,h} =
\begin{cases}
+1, & (p,q) = (i,j), \\
-1, & (p,q) = (i+h,j), \\
-1, & (p,q) = (i,j+h), \\
+1, & (p,q) = (i+h,j+h), \\
0, & \text{otherwise}.
\end{cases} \label{eq:alpha}
\end{equation}
We call the resulting formulation the \textbf{Corner-Difference MILP (CD-MILP)}, because each selected square is represented by four signed corner-difference events rather than by all cells in its support:
\begin{align}
\min \quad & \sum_{(i,j,h)\in\mathcal P} x_{i,j,h} \label{eq:reform_obj}\\
\text{s.t.}\quad
& d_{p,q} = \sum_{(i,j,h)\in\mathcal P} \alpha^{p,q}_{i,j,h} x_{i,j,h},
&& 1\le p,q\le N+1, \label{eq:reform_d}\\
& c_{u,v} = c_{u-1,v}+c_{u,v-1}-c_{u-1,v-1}+d_{u,v},
&& 1\le u,v\le N, \label{eq:reform_prefix}\\
& c_{0,v}=0,
&& 0\le v\le N, \label{eq:reform_boundary_row}\\
& c_{u,0}=0,
&& 1\le u\le N, \label{eq:reform_boundary_col}\\
& c_{u,v}=1,
&& 1\le u,v\le N, \label{eq:c_cover}\\
& d_{p,q}\in\mathbb Z,\quad c_{u,v}\in\mathbb Z,
&& 1\le p,q\le N+1,\; 0\le u,v\le N, \label{eq:reform_integer}\\
& x_{i,j,h}\in\{0,1\},
&& (i,j,h)\in\mathcal P. \label{eq:reform_binary}
\end{align}

The objective~\eqref{eq:reform_obj} and the binary restrictions~\eqref{eq:reform_binary} are unchanged from the exact-cover formulation. Constraints~\eqref{eq:reform_d} define the difference variables from the selected placements. Constraints~\eqref{eq:reform_prefix}, together with the fixed boundary values~\eqref{eq:reform_boundary_row}--\eqref{eq:reform_boundary_col}, reconstruct cell coverage by prefix sums. Constraints~\eqref{eq:c_cover} then impose exact coverage on the reconstructed variables. Difference events with $p=N+1$ or $q=N+1$ are kept in~\eqref{eq:reform_d} to complete the four-corner representation, but they do not enter the cell-domain prefix recurrence~\eqref{eq:reform_prefix} and therefore do not directly affect any coverage variable. The variables $d_{p,q}$ and $c_{u,v}$ are auxiliary; all tiling decisions remain in $x$. Sections~\ref{sec:diff_representation} and~\ref{sec:prefix_reconstruction} explain the two building blocks, and Section~\ref{sec:reform_relation} proves equivalence and analyzes sparsity.

\subsection{Four-corner difference representation}
\label{sec:diff_representation}

Figure~\ref{fig:four_corner_difference} illustrates the four-corner update used in~\eqref{eq:reform_d}. The shaded region is the $h\times h$ support of one selected placement. In the exact-cover formulation, the corresponding variable appears in all $h^2$ cell-cover equations. In CD-MILP, the same placement contributes only four signed events: $+1$ at $(i,j)$ and $(i+h,j+h)$, and $-1$ at $(i+h,j)$ and $(i,j+h)$. The side length $h$ changes the locations of these events but not their number.

\begin{figure}[htbp]
\centering
\includegraphics[width=0.70\linewidth]{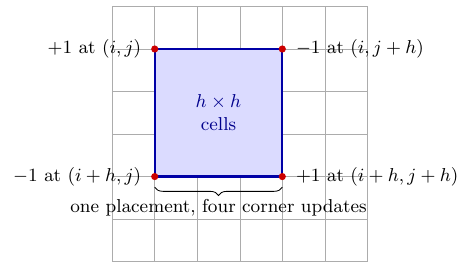}
\caption{Four-corner difference representation of a selected square.}
\label{fig:four_corner_difference}
\end{figure}

More explicitly, a placement $(i,j,h)$ covers the cell set
\[
\{i,\ldots,i+h-1\}\times \{j,\ldots,j+h-1\}.
\]
The four grid points in~\eqref{eq:alpha} are exactly the corners of this covered cell block.

\subsection{2D prefix-sum reconstruction of coverage}
\label{sec:prefix_reconstruction}

The prefix equations invert the corner-difference representation. With $c_{0,v}=c_{u,0}=0$, recurrence~\eqref{eq:reform_prefix} implies
\[
c_{u,v}=\sum_{p=1}^{u}\sum_{q=1}^{v} d_{p,q},
\qquad 1\le u,v\le N.
\]
The inclusion-exclusion term $-c_{u-1,v-1}$ in~\eqref{eq:reform_prefix} is what makes this a two-dimensional cumulative sum rather than two independent one-dimensional recurrences.

For a single placement, the cumulative sum equals one exactly on the shaded cells in Figure~\ref{fig:four_corner_difference} and zero outside them. The event at $(i,j)$ turns coverage on; the two negative events remove it once the horizontal or vertical boundary has been crossed; and the event at $(i+h,j+h)$ corrects the double subtraction beyond both boundaries. For multiple placements, linearity adds the reconstructed indicators. Constraint~\eqref{eq:c_cover} then requires the sum of these indicators to equal one on every cell, exactly as in the original exact-cover model.

\subsection{Equivalence, relaxation, and sparsity}
\label{sec:reform_relation}

\begin{proposition}[Projected equivalence]
\label{prop:equivalence}
CD-MILP~\eqref{eq:reform_obj}--\eqref{eq:reform_binary} and the exact-cover formulation~\eqref{eq:orig_obj}--\eqref{eq:orig_binary} have the same feasible set in the original placement variables $x$.
\end{proposition}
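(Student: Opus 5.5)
The plan is to show that, for any fixed binary vector $x$, the auxiliary variables $d$ and $c$ are uniquely determined by constraints~\eqref{eq:reform_d}--\eqref{eq:reform_boundary_col}. The remaining constraint~\eqref{eq:c_cover} will then turn out to be literally the exact-cover equations~\eqref{eq:orig_cover}.

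\textbf{Step 1 (uniqueness of $d$ and $c$).} Given $x$, constraint~\eqref{eq:reform_d} defines each $d_{p,q}$ as an integer combination of the $x$ variables, so $d$ is integral. Next, proceed by induction on $u+v$. Using the boundary values~\eqref{eq:reform_boundary_row}--\eqref{eq:reform_boundary_col} and recurrence~\eqref{eq:reform_prefix}, each $c_{u,v}$ is forced, and it is integral. A telescoping check of the recurrence then gives the closed form
\[
c_{u,v}=\sum_{p=1}^{u}\sum_{q=1}^{v} d_{p,q},\qquad 1\le u,v\le N .
\]
Concretely, substitute the formula into the right-hand side of~\eqref{eq:reform_prefix}: inclusion–exclusion leaves only the term $d_{u,v}$. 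So for each $x$ there is exactly one integral completion $(d,c)$ of constraints~\eqref{eq:reform_d}--\eqref{eq:reform_boundary_col}, and the integrality requirement~\eqref{eq:reform_integer} is automatically met.

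\textbf{Step 2 (coverage identity).} Substitute~\eqref{eq:reform_d} into the closed form and swap the finite sums:
\[
c_{u,v}=\sum_{(i,j,h)\in\mathcal P} x_{i,j,h}\,\beta^{u,v}_{i,j,h},
\qquad
\beta^{u,v}_{i,j,h}:=\sum_{p\le u,\,q\le v}\alpha^{p,q}_{i,j,h}.
\]
The key computation is $\beta^{u,v}_{i,j,h}=\mathbf 1[i\le u]\mathbf 1[j\le v]-\mathbf 1[i+h\le u]\mathbf 1[j\le v]-\mathbf 1[i\le u]\mathbf 1[j+h\le v]+\mathbf 1[i+h\le u]\mathbf 1[j+h\le v]$. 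This factors as
\[
\bigl(\mathbf 1[i\le u]-\mathbf 1[i+h\le u]\bigr)\bigl(\mathbf 1[j\le v]-\mathbf 1[j+h\le v]\bigr)=\mathbf 1[i\le u\le i+h-1]\,\mathbf 1[j\le v\le j+h-1].
\]
Corner points with coordinate $N+1$ never satisfy $p\le u\le N$, so they contribute nothing; this is consistent with the formula, since $i+h\le N+1$ always holds. Hence $c_{u,v}$ equals the left-hand side of~\eqref{eq:orig_cover}.

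\textbf{Step 3 (conclusion).} If $x$ is feasible for the exact-cover model, define $(d,c)$ as in Step~1. By Step~2, constraint~\eqref{eq:c_cover} holds, so $x$ extends to a CD-MILP solution. Conversely, if $(x,d,c)$ is CD-MILP feasible, Step~1 forces $c$ to be the closed form. Step~2 together with~\eqref{eq:c_cover} then gives~\eqref{eq:orig_cover}. The binary constraints on $x$ are identical in both models, so the projections onto $x$ coincide.

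The only step needing care is the factorization in Step~2. Writing the cumulative sum of the corner coefficients as a product of two one-dimensional differences of step functions makes it immediate, and it also handles the boundary index $N+1$ cleanly. Everything else is bookkeeping.
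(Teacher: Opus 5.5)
Your proof is correct and follows the same route as the paper. You sum the corner coefficients over the prefix rectangle to get the indicator of each square, use linearity to identify $c_{u,v}$ with the cell coverage, and conclude that \eqref{eq:c_cover} is equivalent to \eqref{eq:orig_cover}; your explicit uniqueness and integrality argument for $(d,c)$ and the factorization into one-dimensional step differences spell out what the paper's shorter proof asserts directly.
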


\begin{proof}
Fix any placement vector $x$. For a single placement $(i,j,h)$, summing the four coefficients in~\eqref{eq:alpha} over the rectangle $\{1,\ldots,u\}\times\{1,\ldots,v\}$ gives
\[
\sum_{p=1}^{u}\sum_{q=1}^{v}\alpha^{p,q}_{i,j,h}
=
\begin{cases}
1, & i\le u\le i+h-1,\; j\le v\le j+h-1,\\
0, & \text{otherwise}.
\end{cases}
\]
By linearity, the prefix value $c_{u,v}$ generated by~\eqref{eq:reform_d} and~\eqref{eq:reform_prefix} is exactly the number of selected squares covering cell $(u,v)$. Hence $c_{u,v}=1$ for all cells if and only if the original exact-cover constraints~\eqref{eq:orig_cover} hold. Since both models use the same objective in $x$, they have the same feasible placement vectors and the same optimal objective value.
\end{proof}

\begin{proposition}[LP relaxation]
\label{prop:lp}
The continuous relaxations of CD-MILP and the exact-cover formulation have the same projection onto the placement variables $x$.
\end{proposition}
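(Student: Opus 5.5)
The plan is to repeat the argument of Proposition~\ref{prop:equivalence} with integrality dropped. The key observation is that, in CD-MILP, the auxiliary variables $d$ and $c$ are uniquely determined by $x$ through equalities, and the identity used there is a linear identity in $x$.

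Concretely, the continuous relaxation of CD-MILP replaces \eqref{eq:reform_integer} by $d,c$ real and \eqref{eq:reform_binary} by $0\le x\le 1$. The exact-cover relaxation replaces \eqref{eq:orig_binary} by $0\le x\le 1$. Both relaxations impose the same bound constraints on $x$, so it suffices to show that for every real $x\in[0,1]^{\mathcal P}$ the following holds: there exist real $(d,c)$ satisfying \eqref{eq:reform_d}--\eqref{eq:c_cover} if and only if $x$ satisfies \eqref{eq:orig_cover}.

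The steps, in order, are these.

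First, fix $x$ and define $d$ by \eqref{eq:reform_d}; this is forced. Then show by induction on $u+v$ that \eqref{eq:reform_prefix} with the boundary conditions \eqref{eq:reform_boundary_row}--\eqref{eq:reform_boundary_col} forces
\[
c_{u,v}=\sum_{p=1}^{u}\sum_{q=1}^{v}d_{p,q}.
\]
Hence $(d,c)$ is a unique affine function of $x$. This step uses only linear equalities, so real-valued $x$ is allowed.

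Second, substitute \eqref{eq:reform_d}, exchange the finite sums, and apply the single-placement identity from the proof of Proposition~\ref{prop:equivalence}. This identity is a statement about the fixed integer coefficients $\alpha$, not about $x$. It gives
\[
c_{u,v}=\sum_{(i,j,h)\in\mathcal P:\, i\le u\le i+h-1,\; j\le v\le j+h-1} x_{i,j,h}
\]
for every real $x$.

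Third, conclude both directions. If $x$ is feasible for the exact-cover relaxation, the lifted $(d,c)$ satisfies \eqref{eq:c_cover} and is real, so $(x,d,c)$ is feasible for the relaxed CD-MILP. Conversely, any relaxed CD-MILP solution has $c$ equal to the expression above, and \eqref{eq:c_cover} then gives \eqref{eq:orig_cover}. The two projections therefore coincide, and consequently the LP bounds are equal.

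The main subtle point is ensuring nothing in the relaxation lets $(d,c)$ deviate from the lifted values. The boundary fixings close exactly this gap, so I would verify the induction carefully, including the base rows $u=1$ and columns $v=1$. I would also note that the events $d_{p,q}$ with $p=N+1$ or $q=N+1$ are free definitions that impose no constraint on $x$.
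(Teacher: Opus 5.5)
Your proposal is correct and follows the same route as the paper. The paper's proof also notes that the argument of Proposition~\ref{prop:equivalence} is linear in $x$: for any fractional $x$, constraints \eqref{eq:reform_d}--\eqref{eq:reform_boundary_col} force $c_{u,v}$ to equal the induced cell coverage, which gives both inclusions. Your explicit induction on $u+v$ only spells out a step the paper leaves implicit, namely that the boundary fixings determine $c$ uniquely.
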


\begin{proof}
Replace the binary restrictions on $x$ by $0\le x_{i,j,h}\le 1$ in both formulations, and relax the auxiliary integrality restrictions~\eqref{eq:reform_integer}. The argument in Proposition~\ref{prop:equivalence} is linear and does not use integrality of $x$. For any fractional placement vector satisfying the exact-cover equations, the values of $d$ and $c$ defined by~\eqref{eq:reform_d} and~\eqref{eq:reform_prefix} satisfy CD-MILP. Conversely, any fractional CD-MILP solution reconstructs exactly the cell coverages induced by its placement vector, so $c_{u,v}=1$ implies the exact-cover equations. Thus the two LP relaxations project to the same set of placement vectors.
\end{proof}

\begin{proposition}[Sparsity]
\label{prop:nnz}
CD-MILP reduces the order of the coverage-related nonzero coefficients from $\Theta(N^5)$ in the cell-incidence representation to $\Theta(N^3)$. This reduction is obtained without changing the feasible set in the placement variables.
\end{proposition}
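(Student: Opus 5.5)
The plan is to count nonzeros in each representation directly, and to invoke Proposition~\ref{prop:equivalence} for the claim about the feasible set.

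\textbf{Baseline count.} In the exact-cover constraints~\eqref{eq:orig_cover}, placement $(i,j,h)$ appears in exactly $h^2$ rows. For fixed $h$ there are $(N-h+1)^2$ placements, so the total number of nonzeros is
\[
\sum_{h=1}^{N-1}(N-h+1)^2h^2 .
\]
Substituting $k=N-h+1$ gives the bound $\le N^5$. For the lower bound, restrict to $h\in[N/3,2N/3]$: there are about $N/3$ such terms, each at least $(N/3)^4$. Hence the count is $\Theta(N^5)$. (The exact value is $N^5/30+O(N^4)$ by the Beta-type integral $\int_0^1 t^2(1-t)^2\,dt=1/30$, but this precision is not needed.)

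\textbf{CD-MILP count.} I classify the coverage-related nonzeros into three groups.
\begin{enumerate}[label=(\roman*)]
\item In~\eqref{eq:reform_d}, each placement has exactly four nonzero $\alpha$ coefficients. The four corners $(i,j),(i+h,j),(i,j+h),(i+h,j+h)$ are distinct because $h\ge1$, and all lie in $\{1,\dots,N+1\}^2$ because $i+h\le N+1$. Together with one coefficient for each $d_{p,q}$ on the left-hand side, this gives $4|\mathcal P|+(N+1)^2$ nonzeros.
\item Each prefix equation~\eqref{eq:reform_prefix} has at most five nonzeros. Fixing the boundary variables removes some of them, which only lowers the count. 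This gives at most $5N^2$ nonzeros.
\item The boundary constraints and~\eqref{eq:c_cover} contribute $O(N^2)$ nonzeros.
\end{enumerate}
Since $|\mathcal P|=\sum_{h=1}^{N-1}(N-h+1)^2=\Theta(N^3)$, the total is $4|\mathcal P|+O(N^2)=\Theta(N^3)$. The lower bound holds because the $4|\mathcal P|$ term alone already has order $N^3$.

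\textbf{Feasible set.} The final sentence of the statement follows directly from Proposition~\ref{prop:equivalence}.

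The only step needing care is the lower bound $\Omega(N^5)$ for the baseline, so that the claim is a genuine order reduction and not merely an upper-bound comparison. The restriction to the middle range of $h$ settles it. I will also state explicitly that ``coverage-related'' means all rows of~\eqref{eq:reform_d}--\eqref{eq:c_cover}, so that no nonzeros are omitted from the CD-MILP count.
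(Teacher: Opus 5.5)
Your proof is correct and follows the same route as the paper. Both arguments directly count the $\sum_{h=1}^{N-1}h^2(N-h+1)^2$ incidence nonzeros for the baseline against the four corner coefficients per placement plus $O(N^2)$ auxiliary terms for CD-MILP; your additions are details the paper leaves implicit, and they tighten the argument without changing it: the explicit $\Omega(N^5)$ lower bound via the middle range of $h$, the check that the four corners are distinct and lie in $\{1,\dots,N+1\}^2$, and the explicit appeal to Proposition~\ref{prop:equivalence} for the feasible-set claim.
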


\begin{proof}
Let $A_{\mathrm{cell}}$ denote the placement-to-cell incidence matrix of the exact-cover formulation. Its number of nonzero coefficients is

\[
\operatorname{nnz}(A_{\mathrm{cell}})
=\sum_{h=1}^{N-1}h^2(N-h+1)^2
=\Theta(N^5).
\]
The sparsity orders of constraints~\eqref{eq:reform_d}--\eqref{eq:c_cover} in CD-MILP are as follows. The difference-definition equations~\eqref{eq:reform_d} contain four coefficients for each placement variable, and therefore their $x$-variable block has $\Theta(|\mathcal P|)=\Theta(N^3)$ nonzeros. The same equations contain only $\Theta(N^2)$ additional coefficients from the auxiliary variables $d_{p,q}$. The prefix equations~\eqref{eq:reform_prefix} involve only a constant number of neighboring $c$ and $d$ variables per cell, so they contribute $\Theta(N^2)$ nonzeros. The fixed boundary conditions~\eqref{eq:reform_boundary_row}--\eqref{eq:reform_boundary_col} contribute at most $O(N)$ nonzeros if implemented as explicit rows, and the equations~\eqref{eq:c_cover} contribute $\Theta(N^2)$ nonzeros.

Consequently, the total number of nonzero coefficients in the sparse coverage representation is
\[
\Theta(N^3)+\Theta(N^2)+\Theta(N^2)+O(N)+\Theta(N^2)=\Theta(N^3).
\]
The computational gain therefore comes from changing how the same coverage relation is represented: a placement variable for an $h\times h$ square no longer appears in all $h^2$ cell-cover constraints, but only in the four corner-update coefficients of~\eqref{eq:reform_d}; the global coverage is then propagated by the structured prefix equations~\eqref{eq:reform_prefix}.
\end{proof}

The main advantage of the reformulation is the reduced matrix density, which can make repeated LP solves and related linear-algebra operations cheaper. This benefit is especially relevant for large values of $N$, where large squares create many dense placement-to-cell incidences in the original model. The cost is that the model introduces auxiliary variables and structured equality constraints. Consequently, the row and column counts are not necessarily smaller, and branch-and-bound node counts need not decrease monotonically. The reformulation should therefore be viewed as a sparse extended formulation: it preserves the original placement decisions while changing the linear-algebra structure exposed to the solver.

\subsection{Inventory-Balance Interpretation}
\label{sec:inventory_interpretation}

The sparsity reduction described above is closely related to inventory-balance modeling \cite{pochet2006}. In a one-dimensional inventory model, a replenishment decision affects inventory levels over many future periods. A dense formulation could link the replenishment decision directly to each future period in which its effect is present. The standard inventory-balance formulation avoids this by recording only local inflows and outflows and then propagating their effect through recursive balance equations.

For example, if $y_t$ denotes a local replenishment or net transaction in period $t$, and $I_t$ denotes the resulting inventory level, the balance equations take the form
\begin{equation}
I_t = I_{t-1}+y_t,\qquad t=1,\ldots,T,\quad I_0=0. \label{eq:inventory_balance}
\end{equation}
Equivalently,
\[
I_t=\sum_{\tau=1}^{t}y_\tau.
\]
Thus the model does not need to connect a transaction at time $\tau$ explicitly to every later inventory level $I_t$ with $t\ge \tau$. The dense cumulative relation is represented by local balance equations.

CD-MILP applies the same principle in two spatial dimensions. The difference array $d$ plays the role of the transaction sequence $y$, and the coverage array $c$ plays the role of the inventory level $I$. The prefix equations are precisely the two-dimensional analogue of~\eqref{eq:inventory_balance}:
\begin{equation}
c_{u,v}=c_{u-1,v}+c_{u,v-1}-c_{u-1,v-1}+d_{u,v}. \label{eq:inventory_2d_balance}
\end{equation}
Equivalently, the reconstructed coverage is the cumulative sum
\begin{equation}
c_{u,v}=\sum_{p=1}^{u}\sum_{q=1}^{v} d_{p,q}. \label{eq:inventory_2d_prefix}
\end{equation}
A selected square can therefore be viewed as a local two-dimensional transaction. For a placement $(i,j,h)$, its contribution to $d$ is
\[
\Delta d_{p,q}
=
\mathbf{1}_{(p,q)=(i,j)}
-\mathbf{1}_{(p,q)=(i+h,j)}
-\mathbf{1}_{(p,q)=(i,j+h)}
+\mathbf{1}_{(p,q)=(i+h,j+h)}.
\]
Taking the cumulative sum in~\eqref{eq:inventory_2d_prefix} converts these four local transactions into the indicator of the square:
\[
\sum_{p=1}^{u}\sum_{q=1}^{v}\Delta d_{p,q}
=
\mathbf{1}_{\{i\le u\le i+h-1,\; j\le v\le j+h-1\}}.
\]
The original exact-cover model records this same effect by connecting the placement variable to all cells in the region. CD-MILP records only local transactions at the boundary: coverage is activated at one corner, canceled after crossing each of the two boundaries, and corrected by the opposite-corner inclusion-exclusion term.

This interpretation clarifies why the method removes nonzeros without changing the feasible set in the original placement variables. The dense relation ``a placement covers many cells'' is replaced by a sparse local-update system plus cumulative reconstruction. The same idea can be extended to higher-dimensional axis-aligned covering models.

\section{Symmetry Analysis and Handling}
\label{sec:symmetry}

\subsection[D4 Symmetry and Placement Orbits]{$D_4$ Symmetry and Placement Orbits}

Beyond matrix density, the square tiling problem exhibits strong symmetry. The most prominent symmetries are induced by the dihedral group $D_4$, which consists of the four rotations and four reflections of the square domain. These transformations preserve adjacency, side lengths, coverage, and the number of selected squares. Hence, if a placement vector $x$ is feasible, then every image of $x$ under a transformation in $D_4$ is also feasible and has the same objective value.

Figure~\ref{fig:d4_transformations} illustrates the eight transformations in $D_4$: the identity, three rotations, and four reflections. The numbers in the tiles denote side lengths. Each transformation preserves feasibility and objective value, while permuting the four corner-incident side lengths that define the corner signature.

These symmetries imply that each feasible solution belongs to an equivalence class of up to eight symmetric configurations, leading to redundant exploration in branch-and-bound search \cite{kaibel2008,margot2010}. In an unmodified MILP formulation, the solver may discover several branches that differ only by a rotation or reflection of the same geometric pattern. Such branches have identical objective values and essentially identical local LP structure, but they are not automatically recognized as equivalent by a general-purpose solver.

\begin{figure}[htbp]
    \centering
    \includegraphics[width=\linewidth]{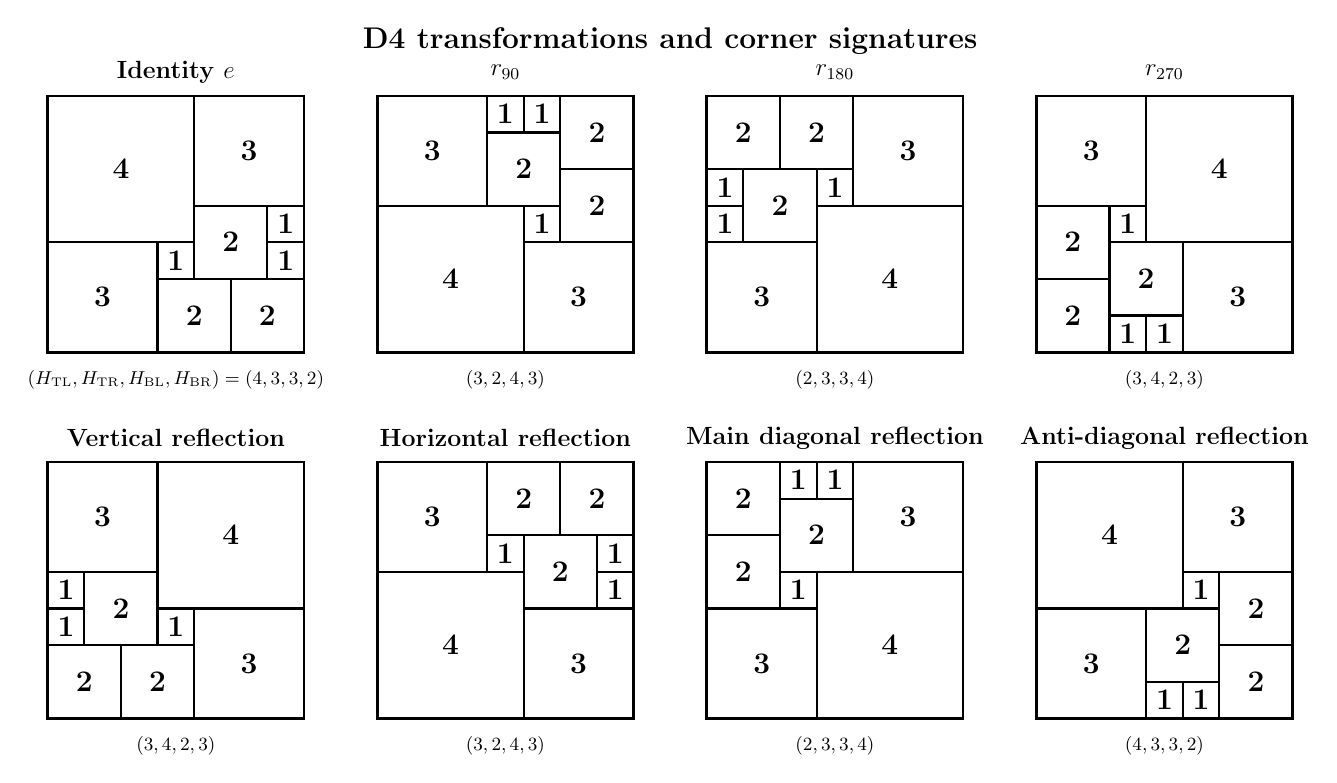}
    \caption{$D_4$ transformations and corner signatures.}
    \label{fig:d4_transformations}
\end{figure}

The symmetry acts directly on placement variables. For example, a square of side length $h$ with top-left corner $(i,j)$ is mapped by a $90^\circ$ clockwise rotation to a square of the same side length whose top-left corner is $(j,N-i-h+2)$. Reflections give analogous affine transformations of the placement coordinates. The important consequence is that the symmetry does not change the set of admissible side lengths or the objective coefficient of any selected square; it only relabels the placement variables. Therefore symmetry can be handled by adding symmetry-breaking constraints that choose one representative orientation from each orbit of the $D_4$ action.

\subsection{Corner-ordering symmetry breaking}

To reduce redundancy induced by these symmetries, we introduce a lightweight tie-breaking rule based on corner-incidence measures. Define the following sums representing the side lengths of squares that occupy the four corners of the $N\times N$ domain:
\begin{align}
H^{\mathrm{TL}} &= \sum_{h=1}^{N-1} h\,x_{1,1,h}, \notag \\
H^{\mathrm{TR}} &= \sum_{h=1}^{N-1} h\,x_{1,N-h+1,h}, \notag \\
H^{\mathrm{BL}} &= \sum_{h=1}^{N-1} h\,x_{N-h+1,1,h}, \notag \\
H^{\mathrm{BR}} &= \sum_{h=1}^{N-1} h\,x_{N-h+1,N-h+1,h} \label{eq:corner_def}
\end{align}
Because every corner cell must be covered exactly once, each quantity in \eqref{eq:corner_def} is the side length of the unique square incident to that corner. These four numbers provide a compact signature of the orientation of a tiling. Under the action of $D_4$, they are permuted in the same way as the four corners of the board.

We use this signature to select a canonical orientation. The first three inequalities require the top-left corner to have a side length at least as large as the side lengths at the other three corners. Since any corner can be mapped to the top-left corner by some element of $D_4$, at least one symmetric image of every feasible tiling satisfies these inequalities. After a corner with the largest incident-square side length has been mapped to the top-left, there remains a reflection across the main diagonal that fixes the top-left and bottom-right corners while swapping the top-right and bottom-left corners. The final inequality selects one of these two remaining orientations:
\begin{equation}
H^{\mathrm{TL}} \ge H^{\mathrm{TR}},\; H^{\mathrm{TL}} \ge H^{\mathrm{BL}},\; H^{\mathrm{TL}} \ge H^{\mathrm{BR}},\; H^{\mathrm{TR}} \ge H^{\mathrm{BL}} \label{eq:corner_ineq}
\end{equation}

\begin{proposition}[Validity of corner ordering]
\label{prop:corner_ordering}
For every feasible placement vector $x$, its orbit under the $D_4$ action contains at least one feasible placement vector satisfying the corner-ordering inequalities~\eqref{eq:corner_ineq}. Consequently, adding these inequalities to either the exact-cover formulation or CD-MILP does not change the optimal objective value.
\end{proposition}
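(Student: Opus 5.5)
The plan is to make the $D_4$ action on placement vectors explicit, show that it permutes the corner signature $(H^{\mathrm{TL}},H^{\mathrm{TR}},H^{\mathrm{BR}},H^{\mathrm{BL}})$ like the corners of the board, and then pick a suitable group element by a two-step choice. For $g\in D_4$, regarded as a map of the grid $\{1,\dots,N\}^2$ that preserves adjacency, let $g$ send placement $(i,j,h)$ to the placement whose covered cell block is the image of $\{i,\dots,i+h-1\}\times\{j,\dots,j+h-1\}$. For example, the clockwise rotation gives $(j,N-i-h+2,h)$, as stated in the preceding subsection. Define $(gx)_{g(i,j,h)}=x_{i,j,h}$. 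Since $g$ is a bijection of $\mathcal P$ preserving $h$, and it maps the set of blocks covering a cell $(u,v)$ onto the set of blocks covering $g(u,v)$, the exact-cover constraints~\eqref{eq:orig_cover} for $x$ become those for $gx$. The objective is also invariant, so $gx$ is feasible with the same value.

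Next I will relate the corner quantities. For a feasible $x$, the corner cell $(1,1)$ is covered by exactly one selected square, and that square must have upper-left corner $(1,1)$. Hence $H^{\mathrm{TL}}(x)$ equals the side length of this square, and the same holds at the other three corners. A square incident to a board corner $\kappa$ is mapped by $g$ to a square of the same side incident to $g(\kappa)$. Therefore $H^{g(\kappa)}(gx)=H^{\kappa}(x)$ for every corner $\kappa$: the signature is permuted exactly as $D_4$ permutes the four corners.

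The choice proceeds in two steps. First, let $\kappa^\ast$ be a corner attaining $\max_\kappa H^\kappa(x)$. Since $D_4$ acts transitively on the corners, some rotation $r$ maps $\kappa^\ast$ to TL, so $x'=rx$ satisfies the first three inequalities of~\eqref{eq:corner_ineq}. Second, if $H^{\mathrm{TR}}(x')<H^{\mathrm{BL}}(x')$, apply the main-diagonal reflection $\sigma:(u,v)\mapsto(v,u)$, which acts on placements by $(i,j,h)\mapsto(j,i,h)$. This reflection fixes TL and BR and swaps TR with BL. Hence $\sigma x'$ keeps the maximum at TL and satisfies $H^{\mathrm{TR}}\ge H^{\mathrm{BL}}$. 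Either $x'$ or $\sigma x'$ lies in the orbit of $x$ and satisfies all four inequalities.

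For the consequence, apply this to an optimal $x$ of the exact-cover formulation. The resulting representative is feasible, has the same objective value, and satisfies~\eqref{eq:corner_ineq}. Adding the inequalities only shrinks the feasible set, so the optimal value is unchanged. By Proposition~\ref{prop:equivalence}, the same conclusion holds for CD-MILP, after completing $d$ and $c$ from the representative via~\eqref{eq:reform_d}--\eqref{eq:reform_prefix}.

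The main obstacle is the bookkeeping in the second paragraph. One must check that the image of a corner-incident square, written in upper-left-corner coordinates, is exactly the placement indexed in the corresponding term of~\eqref{eq:corner_def}, for example $(1,N-h+1,h)$ for TR. One must also use feasibility so that each $H^\kappa$ is a single side length rather than a sum of several terms. I would verify this for the two generators, the rotation and $\sigma$, and extend it to all of $D_4$ by composition.
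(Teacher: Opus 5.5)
Your proposal is correct and follows essentially the same argument as the paper: use some element of $D_4$ to move a corner of maximal incident side length to the top-left, then apply the main-diagonal reflection if needed to enforce $H^{\mathrm{TR}}\ge H^{\mathrm{BL}}$. Your version additionally spells out what the paper leaves implicit: the placement-level action, the invariance of~\eqref{eq:orig_cover} under it, why feasibility makes each $H^\kappa$ a single side length, and the transfer to CD-MILP via Proposition~\ref{prop:equivalence}.
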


\begin{proof}
The transformations in $D_4$ preserve feasibility and the objective value, and they only permute the four corner quantities in~\eqref{eq:corner_def}. Consider any feasible tiling and choose a corner whose incident square has maximum side length among the four corners. Some transformation in $D_4$ maps this corner to the top-left corner. In the transformed tiling, $H^{\mathrm{TL}}$ is therefore at least as large as $H^{\mathrm{TR}}$, $H^{\mathrm{BL}}$, and $H^{\mathrm{BR}}$.

It remains only to order the top-right and bottom-left corner quantities. If $H^{\mathrm{TR}}\ge H^{\mathrm{BL}}$ already holds, no further transformation is needed. Otherwise, reflect the tiling across the main diagonal. This reflection fixes the top-left and bottom-right corners and swaps the top-right and bottom-left corners, so $H^{\mathrm{TL}}$ remains the largest of the four corner quantities and the transformed tiling satisfies $H^{\mathrm{TR}}\ge H^{\mathrm{BL}}$. Thus every $D_4$ orbit contains at least one representative satisfying~\eqref{eq:corner_ineq}. Since at least one representative of every optimal orbit remains feasible, the optimal objective value is unchanged.
\end{proof}

These constraints are deliberately lightweight. They involve only placement variables that touch the four corners, so the number of additional nonzero coefficients is $O(N)$ and is negligible compared with either the dense exact-cover matrix or the CD-MILP formulation. They can be added directly to the baseline model as well as to the reformulated model because they are expressed entirely in the original placement variables.

The constraints should be interpreted as partial symmetry-breaking constraints. If the corner signature contains ties, some residual rotations or reflections may remain. Nevertheless, Proposition~\ref{prop:corner_ordering} shows that the inequalities are valid for optimization because they keep at least one representative from every $D_4$ orbit.

\section{Computational Results}
\label{sec:num}

This section evaluates the computational effect of CD-MILP and the corner-ordering symmetry-breaking constraints. All reported runs use the same instance set, thread count, and time limit for a given solver, so that the comparison focuses on the formulation rather than on changes in the experimental protocol.

All experiments were conducted using \textsc{Gurobi} and \textsc{COPT} on the same hardware platform: Ubuntu 24.04.2 LTS with an AMD Ryzen Threadripper 7970X 32-core processor and 512 GB RAM. The \textsc{Gurobi} runs used \textsc{Gurobi} Optimizer 13.0.2, and the \textsc{COPT} runs used Cardinal Optimizer v8.0.4 on Linux (Ubuntu 24.04.2 LTS, x86\_64). Unless otherwise stated, each solver was executed with eight threads (\texttt{Threads=8}), the default presolve level enabled explicitly (\texttt{Presolve=1}), a time limit of 7200 seconds, and zero relative MIP optimality gap tolerance (\texttt{MIPGap=0}). For instances that reached the time limit, we report the final optimality gap returned by the solver.

We consider square tiling instances with side lengths
\[
N \in \{31,37,41,43,47,53,59,61,67\}.
\]
The instance set focuses on prime benchmark sizes, including larger values of $N$, in order to test whether the reduction in nonzero coefficients translates into improved scalability and to avoid decompositions induced by nontrivial divisors of the side length.

We evaluate four MILP formulations:
\begin{itemize}
    \item Exact-Cover MILP: cell-level exact-cover formulation (Section~\ref{sec:orig});
    \item Corner-Difference MILP: corner-difference formulation (Section~\ref{sec:2ddiff});
    \item Exact-Cover MILP with Corner Ordering: exact-cover formulation with corner-ordering constraints (Section~\ref{sec:symmetry});
    \item Corner-Difference MILP with Corner Ordering: Corner-Difference MILP with the corner-ordering constraints \eqref{eq:corner_ineq}.
\end{itemize}

\subsection{Structural Sparsity and Model-Size Reduction}
\begin{table}[!htbp]
\centering
\caption{Model-size statistics.}
\label{tab:model_size}
\setlength{\tabcolsep}{3pt}
\begin{tabular}{llrrrrrrr}
\toprule
Solver & Instance & \multicolumn{3}{c}{Exact-Cover MILP}
         & \multicolumn{3}{c}{\shortstack{Corner-Difference MILP\\with Corner Ordering}}
         & NZ reduction \\
\cmidrule(lr){3-5} \cmidrule(lr){6-8}
 & & Rows & Cols & NZ
 & Rows & Cols & NZ
 & factor \\
\midrule
\textsc{Gurobi} & $31$ & 841 & 10,171 & 145,730 & 965 & 10,415 & 39,947 & 3.65 \\
 & $37$ & 1,225 & 17,280 & 1,345,798 & 1,373 & 17,574 & 67,809 & 19.85 \\
 & $41$ & 1,521 & 23,494 & 2,871,126 & 1,685 & 23,820 & 92,197 & 31.14 \\
 & $43$ & 1,681 & 27,091 & 4,736,302 & 1,853 & 27,433 & 106,327 & 44.54 \\
 & $47$ & 2,025 & 35,343 & 8,292,331 & 2,213 & 35,719 & 138,779 & 59.75 \\
 & $53$ & 2,601 & 50,614 & 15,016,046 & 2,813 & 51,038 & 198,897 & 75.50 \\
 & $59$ & 3,249 & 69,737 & 25,519,545 & 3,488 & 70,212 & 274,059 & 93.11 \\
 & $61$ & 3,481 & 77,042 & 30,094,818 & 3,728 & 77,533 & 302,869 & 99.36 \\
 & $67$ & 4,225 & 101,973 & 47,876,237 & 4,496 & 102,512 & 401,267 & 119.31 \\
\midrule
\textsc{COPT} & $31$ & 837 & 7,716 & 796,048 & 1,012 & 10,417 & 41,126 & 19.36 \\
 & $37$ & 1,221 & 13,685 & 1,988,129 & 1,432 & 17,579 & 69,657 & 28.54 \\
 & $41$ & 1,517 & 19,019 & 3,375,251 & 1,769 & 23,853 & 95,284 & 35.42 \\
 & $43$ & 1,677 & 22,140 & 4,312,048 & 1,941 & 27,471 & 109,753 & 39.29 \\
 & $47$ & 2,021 & 29,370 & 6,806,642 & 2,309 & 35,776 & 142,976 & 47.61 \\
 & $53$ & 2,597 & 42,925 & 12,587,561 & 2,921 & 51,116 & 204,333 & 61.60 \\
 & $59$ & 3,245 & 60,116 & 21,758,264 & 3,605 & 70,299 & 281,053 & 77.42 \\
 & $61$ & 3,477 & 66,729 & 25,786,477 & 3,849 & 77,624 & 310,349 & 83.09 \\
 & $67$ & 4,221 & 89,440 & 41,565,468 & 4,629 & 102,615 & 410,301 & 101.30 \\
\bottomrule
\end{tabular}

\end{table}

Table~\ref{tab:model_size} reports the model-size statistics for the Exact-Cover MILP and the Corner-Difference MILP with Corner Ordering. These statistics isolate the structural effect of the corner-difference representation before branch-and-bound performance is considered. NZ denotes the number of nonzero coefficients in the presolved constraint matrix. The reduction factor is computed as the Exact-Cover MILP NZ divided by the Corner-Difference MILP with Corner Ordering NZ.

Corner-Difference MILP with Corner Ordering has slightly more rows and columns, because it introduces the auxiliary difference and prefix variables. However, the number of nonzero coefficients is reduced by one to two orders of magnitude. For $N = 67$, Corner-Difference MILP with Corner Ordering achieves approximately a 119-fold reduction in nonzeros relative to the exact-cover baseline with \textsc{Gurobi}, and approximately a 101-fold reduction with \textsc{COPT}.

This confirms the mechanism described in Proposition~\ref{prop:nnz}. Corner-Difference MILP with Corner Ordering is not primarily smaller in row or column count; rather, it replaces dense placement-to-cell incidence by sparse local updates and prefix reconstruction. The computational results suggest that this structural change is valuable for modern MILP solvers, especially on the larger instances where repeated LP processing dominates the solution effort.

\subsection{Overall effect of the formulations}

Table~\ref{tab:performance_summary} summarizes the aggregate behavior of the four formulations. ``Solved'' is the number of instances solved to optimality within the time limit, and all averages are computed over the nine tested instances. For unsolved instances, 7200 seconds is used when computing average runtime and the aggregate LP-processing rate. Avg. NZ is the average number of nonzero coefficients in the presolved constraint matrix, and LP iter./s is cumulative LP iterations divided by total runtime. Detailed ablation results and model-size statistics for the intermediate Corner-Difference MILP and Exact-Cover MILP with Corner Ordering formulations are provided in Tables~\ref{tab:app_reform_cut} and~\ref{tab:app_model_size_reform_cut}.

\begin{table}[htbp]
\centering
\caption{Aggregate computational performance.}
\label{tab:performance_summary}
\setlength{\tabcolsep}{3pt}
\begin{tabular}{@{}lrrrrr@{}}
\toprule
Formulation & Solved & \shortstack{Avg.\\$T$ (s)} & \shortstack{Avg.\\nodes}
            & \shortstack{Avg.\\NZ} & \shortstack{LP\\iter./s} \\
\midrule
\multicolumn{6}{l}{\textsc{Gurobi}} \\
Exact-Cover MILP & 4 & 4,150.59 & 2,754.33 & 1.51e7 & 83.79 \\
Corner-Difference MILP & 6 & 2,857.68 & 4,236.89 & 1.80e5 & 271.00 \\
Exact-Cover MILP with Corner Ordering & 6 & 3,074.84 & 1,630.11 & 1.52e7 & 88.17 \\
Corner-Difference MILP with Corner Ordering & 8 & 1,876.12 & 2,411.44 & 1.80e5 & 232.16 \\
\midrule
\multicolumn{6}{l}{\textsc{COPT}} \\
Exact-Cover MILP & 5 & 4,485.15 & 3,888.67 & 1.32e7 & 106.03 \\
Corner-Difference MILP & 5 & 4,278.26 & 5,207.44 & 1.85e5 & 252.44 \\
Exact-Cover MILP with Corner Ordering & 5 & 3,785.55 & 1,095.00 & 1.35e7 & 47.55 \\
Corner-Difference MILP with Corner Ordering & 7 & 2,475.91 & 2,826.44 & 1.85e5 & 162.90 \\
\bottomrule
\end{tabular}
\end{table}

The first clear effect is the reduction in the number of nonzero coefficients. For both solvers, the Corner-Difference MILP have average NZ around $10^5$, whereas the corresponding cell-incidence formulations have average NZ around $10^7$. This confirms that the sparsity gain appears in the solver-reported model statistics, not only in the symbolic formulation.

The iteration rates in Table~\ref{tab:performance_summary} should be read together with the model-size statistics: for both solvers, using Corner-Difference MILP increases the number of LP iterations processed per second, both without and with the corner-ordering constraints. This is consistent with solving LPs on much sparser matrices. The corner-ordering constraints have a different profile: they do not materially reduce NZ, and their effect on the iteration rate is secondary and mixed; their main contribution is reflected in the reduced node counts and runtimes.

The second effect is algorithmic. The Corner-Difference MILP alone substantially reduces the \textsc{Gurobi} average runtime and solves two additional instances relative to the baseline. For \textsc{COPT}, the Corner-Difference MILP alone gives a smaller improvement in solved count, but still reduces the reported nonzero count by nearly two orders of magnitude.

The combined formulation gives the best overall result. It solves eight of the nine instances with \textsc{Gurobi} and seven with \textsc{COPT}, compared with four and five, respectively, for the exact-cover baseline. It also has the smallest average runtime for both solvers. The node counts are not uniformly smaller. This should not be read as a contradiction to Proposition~\ref{prop:lp}: projected LP equivalence only compares the feasible placement vectors in the continuous relaxation, and does not imply that a solver will process identical extended LPs, generate the same cuts, or produce the same branch-and-bound tree. Thus node count is a formulation- and solver-dependent procedural statistic, while solved count, runtime, final gaps, and NZ provide a more stable summary of the observed computational effect.

The ablation results in Appendix~\ref{app:ablation} also show that the corner-ordering constraints should not be interpreted as a standalone primal-search improvement. For example, on the \textsc{Gurobi} run for $N=67$, Exact-Cover MILP with Corner Ordering terminates with a poor incumbent despite processing very few nodes. This anomaly suggests that partial symmetry breaking can alter solver heuristics and primal search in instance-dependent ways, so the main empirical conclusion is the complementarity of Corner-Difference MILP and corner ordering rather than a uniform benefit from either component alone.

\subsection{Per-instance comparison by solver}

Table~\ref{tab:solver_detail} compares the Exact-Cover MILP and the Corner-Difference MILP with Corner Ordering instance by instance. UB is the incumbent objective value, BestBd is the solver-reported final best bound, $G$ is the final relative MIP optimality gap, and $T$ is the wall-clock runtime. This view is useful because aggregate averages hide where the improvement occurs. The small and medium instances are solved by both formulations, while the larger instances expose the difference in scalability.

\begin{table}[!htbp]
\centering
\caption{Per-instance computational performance.}
\label{tab:solver_detail}
\setlength{\tabcolsep}{1.6pt}
\begin{tabular}{llrrrrr rrrrr}
\toprule
Solver & Instance & \multicolumn{5}{c}{Exact-Cover MILP}
         & \multicolumn{5}{c}{\shortstack{Corner-Difference MILP\\with Corner Ordering}} \\
\cmidrule(lr){3-7} \cmidrule(lr){8-12}
 & & UB & BestBd & $T$ (s) & Nodes & $G$ (\%)
 & UB & BestBd & $T$ (s) & Nodes & $G$ (\%) \\
\midrule
\textsc{Gurobi} & $31$ & 15 & 15 & 32.00 & 1,522 & 0.00 & 15 & 15 & 14.25 & 608 & 0.00 \\
 & $37$ & 15 & 15 & 184.39 & 2,258 & 0.00 & 15 & 15 & 49.84 & 429 & 0.00 \\
 & $41$ & 15 & 15 & 604.26 & 4,252 & 0.00 & 15 & 15 & 113.81 & 1,320 & 0.00 \\
 & $43$ & 16 & 16 & 534.64 & 4,360 & 0.00 & 16 & 16 & 186.30 & 1,389 & 0.00 \\
 & $47$ & 16 & 13 & 7,200.05 & 8,704 & 18.75 & 16 & 16 & 292.90 & 1,510 & 0.00 \\
 & $53$ & 16 & 11 & 7,200.10 & 2,555 & 31.25 & 16 & 16 & 1,384.06 & 4,488 & 0.00 \\
 & $59$ & 17 & 11 & 7,200.61 & 748 & 35.29 & 17 & 17 & 2,918.39 & 3,981 & 0.00 \\
 & $61$ & 17 & 10 & 7,201.56 & 364 & 41.18 & 17 & 17 & 4,725.54 & 4,680 & 0.00 \\
 & $67$ & 18 & 10 & 7,201.97 & 26 & 44.44 & 17 & 12 & 7,200.02 & 3,298 & 29.41 \\
\midrule
\textsc{COPT} & $31$ & 15 & 15 & 95.19 & 3,822 & 0.00 & 15 & 15 & 47.82 & 313 & 0.00 \\
 & $37$ & 15 & 15 & 318.85 & 3,406 & 0.00 & 15 & 15 & 152.63 & 585 & 0.00 \\
 & $41$ & 15 & 15 & 856.41 & 2,589 & 0.00 & 15 & 15 & 355.49 & 815 & 0.00 \\
 & $43$ & 16 & 16 & 3,251.88 & 10,851 & 0.00 & 16 & 16 & 1,277.24 & 2,965 & 0.00 \\
 & $47$ & 16 & 16 & 7,043.99 & 10,272 & 0.00 & 16 & 16 & 370.82 & 1,536 & 0.00 \\
 & $53$ & 16 & 12.80 & 7,200.48 & 2,561 & 19.98 & 16 & 16 & 1,143.51 & 3,083 & 0.00 \\
 & $59$ & 17 & 9.17 & 7,200.82 & 734 & 46.04 & 17 & 17 & 4,535.69 & 5,520 & 0.00 \\
 & $61$ & 17 & 9.46 & 7,201.02 & 562 & 44.37 & 17 & 14.78 & 7,200.04 & 9,606 & 13.06 \\
 & $67$ & 18 & 9.13 & 7,202.01 & 201 & 49.29 & 17 & 12.36 & 7,200.03 & 1,015 & 27.31 \\
\bottomrule
\end{tabular}

\end{table}

For \textsc{Gurobi}, the exact-cover baseline reaches the time limit for all tested instances with $N\ge47$, while Corner-Difference MILP with Corner Ordering proves optimality for all instances up to $N=61$. On $N=67$, neither formulation proves optimality within the time limit, but Corner-Difference MILP with Corner Ordering finds a better incumbent value and leaves a smaller final gap. The improvement is especially pronounced for $N=47$ through $N=61$, where Corner-Difference MILP with Corner Ordering closes instances that remain open under the baseline.

For \textsc{COPT}, the same qualitative pattern appears with a slightly different cutoff. Corner-Difference MILP with Corner Ordering solves all instances up to $N=59$, whereas the exact-cover baseline leaves positive gaps for $N=53,59,61,$ and $67$. On the two hardest instances, $N=61$ and $N=67$, Corner-Difference MILP with Corner Ordering still times out, but its final gaps are substantially smaller than those of the baseline.

\FloatBarrier
\section{Conclusion}
\label{sec:conclu}
In this paper, we studied MILP formulations for the square tiling problem and proposed CD-MILP, an exact sparse reformulation that can be combined with corner-ordering symmetry-breaking constraints. The main modeling contribution is to replace the dense cell-incidence structure of the classical exact-cover formulation with local four-corner updates and prefix-sum reconstruction. This reformulation preserves the original tiling decisions while reducing the number of placement-related nonzero coefficients from $\Theta(N^5)$ to $\Theta(N^3)$.

The proposed model also clarifies why the reduction is possible. The corner-difference representation can be interpreted as a two-dimensional extension of inventory balance modeling: instead of linking a decision to every location affected by it, the model records local net changes and reconstructs cumulative coverage through conservation-type equations. This viewpoint gives an intuitive explanation for the sparsity improvement and may be useful for related geometric covering and packing problems.

The corner-based symmetry-breaking inequalities address a different source of difficulty. By imposing a canonical ordering on the square sizes incident to the four corners, the model removes some redundant representatives generated by the dihedral symmetries of the square while preserving at least one representative from each symmetry orbit. These inequalities are linear and lightweight, and they can be added directly to either the original formulation or the sparse reformulation.

The computational results show that these two mechanisms are complementary. Across both \textsc{Gurobi} and \textsc{COPT}, Corner-Difference MILP with Corner Ordering solves more instances within the time limit than the exact-cover baseline and gives smaller remaining gaps on the hardest tested instances. The model-size comparison confirms the intended sparsity effect: the number of rows and columns changes only moderately, while the number of nonzero coefficients is reduced by one to two orders of magnitude on the larger instances.

These results suggest that sparse cumulative reformulations are a useful modeling tool for geometric covering problems whose natural exact-cover formulations contain dense placement-to-cell incidence matrices. Future work could extend the approach to other tiling and packing variants, incorporate stronger problem-specific valid inequalities, and study how solver presolve routines interact with difference-based formulations.

\clearpage
\appendix

\section{Ablation Details}
\label{app:ablation}

This appendix provides detailed ablation results and model-size statistics for the Corner-Difference MILP and the Exact-Cover MILP with Corner Ordering. The former excludes symmetry-breaking constraints, whereas the latter augments the exact-cover formulation with corner-ordering constraints.

\begin{table}[!htbp]
\centering
\caption{Ablation performance.}
\label{tab:app_reform_cut}
\setlength{\tabcolsep}{1.6pt}
\begin{tabular}{llrrrrr rrrrr}
\toprule
Solver & Instance & \multicolumn{5}{c}{Corner-Difference MILP}
       & \multicolumn{5}{c}{\shortstack{Exact-Cover MILP\\with Corner Ordering}} \\
\cmidrule(lr){3-7} \cmidrule(lr){8-12}
 & & UB & BestBd & $T$ (s) & Nodes & $G$ (\%)
 & UB & BestBd & $T$ (s) & Nodes & $G$ (\%) \\
\midrule
\textsc{Gurobi}
 & $31$ & 15 & 15 & 21.67 & 1,317 & 0.00 & 15 & 15 & 42.78 & 384 & 0.00 \\
 & $37$ & 15 & 15 & 66.45 & 1,370 & 0.00 & 15 & 15 & 288.51 & 1,715 & 0.00 \\
 & $41$ & 15 & 15 & 107.54 & 1,072 & 0.00 & 15 & 15 & 791.49 & 3,109 & 0.00 \\
 & $43$ & 16 & 16 & 241.78 & 3,070 & 0.00 & 16 & 16 & 602.02 & 1,527 & 0.00 \\
 & $47$ & 16 & 16 & 590.65 & 3,161 & 0.00 & 16 & 16 & 857.33 & 1,169 & 0.00 \\
 & $53$ & 16 & 16 & 3,053.13 & 9,627 & 0.00 & 16 & 16 & 3,491.41 & 2,180 & 0.00 \\
 & $59$ & 17 & 13 & 7,200.06 & 8,270 & 23.53 & 17 & 15 & 7,200.15 & 2,499 & 11.76 \\
 & $61$ & 17 & 13 & 7,200.01 & 7,842 & 23.53 & 17 & 14 & 7,200.07 & 2,083 & 17.65 \\
 & $67$ & 17 & 11 & 7,200.01 & 2,403 & 35.29 & 40 & 10 & 7,200.91 & 5 & 75.00 \\
\midrule
\textsc{COPT}
 & $31$ & 15 & 15 & 59.17 & 989 & 0.00 & 15 & 15 & 30.99 & 346 & 0.00 \\
 & $37$ & 15 & 15 & 363.45 & 2,397 & 0.00 & 15 & 15 & 227.56 & 775 & 0.00 \\
 & $41$ & 15 & 15 & 986.59 & 4,383 & 0.00 & 15 & 15 & 1,054.46 & 1,638 & 0.00 \\
 & $43$ & 16 & 16 & 3,462.51 & 13,809 & 0.00 & 16 & 16 & 1,089.52 & 1,750 & 0.00 \\
 & $47$ & 16 & 16 & 4,832.59 & 12,157 & 0.00 & 16 & 16 & 2,867.39 & 1,660 & 0.00 \\
 & $53$ & 16 & 10.06 & 7,200.02 & 7,818 & 37.13 & 17 & 9.87 & 7,200.49 & 2,316 & 41.91 \\
 & $59$ & 17 & 9.62 & 7,200.04 & 2,712 & 43.42 & 17 & 9.25 & 7,200.84 & 675 & 45.61 \\
 & $61$ & 17 & 9.51 & 7,200.15 & 1,523 & 44.03 & 17 & 9.24 & 7,201.29 & 595 & 45.62 \\
 & $67$ & 17 & 9.42 & 7,200.09 & 1,079 & 44.59 & 18 & 9.17 & 7,202.04 & 100 & 49.03 \\
\bottomrule
\end{tabular}
\end{table}
\FloatBarrier

Table~\ref{tab:app_reform_cut} uses the performance metrics defined for Table~\ref{tab:solver_detail}, while Table~\ref{tab:app_model_size_reform_cut} uses the same presolved model-size metrics as Table~\ref{tab:model_size}.

\begin{table}[htbp]
\centering
\caption{Ablation model-size statistics.}
\label{tab:app_model_size_reform_cut}
\setlength{\tabcolsep}{2pt}
\begin{tabular}{llrrr rrr}
\toprule
Solver & Instance & \multicolumn{3}{c}{Corner-Difference MILP}
         & \multicolumn{3}{c}{\shortstack{Exact-Cover MILP\\with Corner Ordering}} \\
\cmidrule(lr){3-5} \cmidrule(lr){6-8}
 & & Rows & Cols & NZ & Rows & Cols & NZ \\
\midrule
\textsc{Gurobi} & $31$ & 961 & 10,415 & 39,710 & 845 & 10,252 & 155,276 \\
 & $37$ & 1,369 & 17,574 & 67,524 & 1,229 & 17,379 & 1,398,685 \\
 & $41$ & 1,681 & 23,820 & 91,880 & 1,525 & 23,602 & 3,035,001 \\
 & $43$ & 1,849 & 27,433 & 105,994 & 1,685 & 27,205 & 4,713,735 \\
 & $47$ & 2,209 & 35,719 & 138,414 & 2,029 & 35,469 & 8,350,666 \\
 & $53$ & 2,809 & 51,038 & 198,484 & 2,605 & 50,758 & 15,105,626 \\
 & $59$ & 3,481 & 70,209 & 273,818 & 3,256 & 69,917 & 25,690,737 \\
 & $61$ & 3,721 & 77,530 & 302,620 & 3,488 & 77,228 & 30,285,173 \\
 & $67$ & 4,489 & 102,509 & 400,994 & 4,232 & 102,177 & 48,132,553 \\
\midrule
\textsc{COPT} & $31$ & 1,024 & 10,478 & 41,723 & 847 & 7,919 & 843,277 \\
 & $37$ & 1,444 & 17,649 & 70,371 & 1,231 & 13,938 & 2,077,634 \\
 & $41$ & 1,764 & 23,903 & 95,363 & 1,527 & 19,301 & 3,499,033 \\
 & $43$ & 1,936 & 27,520 & 109,819 & 1,687 & 22,438 & 4,457,408 \\
 & $47$ & 2,304 & 35,814 & 142,971 & 2,031 & 29,700 & 7,002,486 \\
 & $53$ & 2,916 & 51,145 & 204,259 & 2,607 & 43,300 & 12,874,075 \\
 & $59$ & 3,600 & 70,328 & 280,955 & 3,255 & 60,536 & 22,159,853 \\
 & $61$ & 3,844 & 77,653 & 310,243 & 3,487 & 67,165 & 26,234,688 \\
 & $67$ & 4,624 & 102,644 & 410,171 & 4,231 & 89,924 & 42,175,081 \\
\bottomrule
\end{tabular}
\end{table}

\FloatBarrier

The following appendices list representative optimal tilings for $N=41$ and $N=47$. These two instances are included to make the displayed tiling figures reproducible; the remaining benchmark instances are used only in the computational comparison tables and are not listed as placement tables here.

\clearpage
\section[Optimal Solution for the 41 x 41 Instance]{Optimal Solution for the $41\times 41$ Instance}
\label{app:square41}

This appendix reports an optimal tiling for the $41\times 41$ instance. Figure~\ref{fig:square41} shows the tiling. Placement coordinates use the same one-based upper-left cell convention as the formulations in Sections~\ref{sec:orig}--\ref{sec:2ddiff}; if supplementary files use zero-based row and column coordinates, add one to each row and column coordinate to obtain the coordinates used here. Each row of Table~\ref{tab:placements41} lists a placement variable $x_{i,j,h}$, where $(i,j)$ is the upper-left cell coordinate and $h$ is the side length.

\begin{figure}[htbp]
    \centering
    \includegraphics[width=0.62\linewidth]{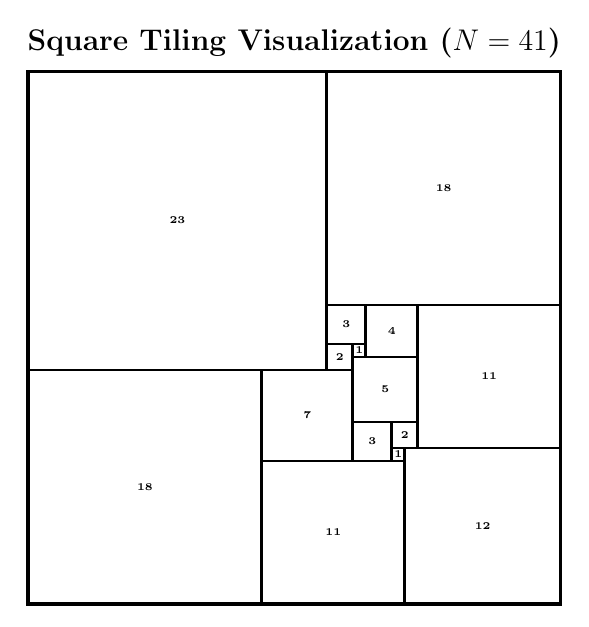}
\caption{Optimal tiling for $N=41$.}
    \label{fig:square41}
\end{figure}
\FloatBarrier

\begin{table}[htbp]
\centering
\caption{Placement coordinates for Figure~\ref{fig:square41}.}
\label{tab:placements41}
\begin{tabular}{rrr rrr}
\toprule
\multicolumn{3}{c}{Placement} & \multicolumn{3}{c}{Placement} \\
\cmidrule(lr){1-3} \cmidrule(lr){4-6}
Row $i$ & Column $j$ & Side $h$ & Row $i$ & Column $j$ & Side $h$ \\
\midrule
1  & 1  & 23 & 19 & 27 & 4 \\
1  & 24 & 18 & 19 & 24 & 3 \\
24 & 1  & 18 & 28 & 26 & 3 \\
19 & 31 & 11 & 22 & 24 & 2  \\
30 & 30 & 12 & 28 & 29 & 2 \\
31 & 19 & 11 & 22 & 26 & 1  \\
24 & 19 & 7  & 30 & 29 & 1   \\
23 & 26 & 5  &    &    &   \\
\bottomrule
\end{tabular}
\end{table}
\FloatBarrier

\clearpage

\section[Optimal Solution for the 47 x 47 Instance]{Optimal Solution for the $47\times 47$ Instance}
\label{app:square47}

This appendix reports an optimal tiling for the $47\times 47$ instance. Figure~\ref{fig:square47} shows the tiling. Placement coordinates use the same one-based upper-left cell convention as the formulations in Sections~\ref{sec:orig}--\ref{sec:2ddiff}; if supplementary files use zero-based row and column coordinates, add one to each row and column coordinate to obtain the coordinates used here. Each row of Table~\ref{tab:placements47} lists a placement variable $x_{i,j,h}$, where $(i,j)$ is the upper-left cell coordinate and $h$ is the side length.

\begin{figure}[htbp]
    \centering
    \includegraphics[width=0.62\linewidth]{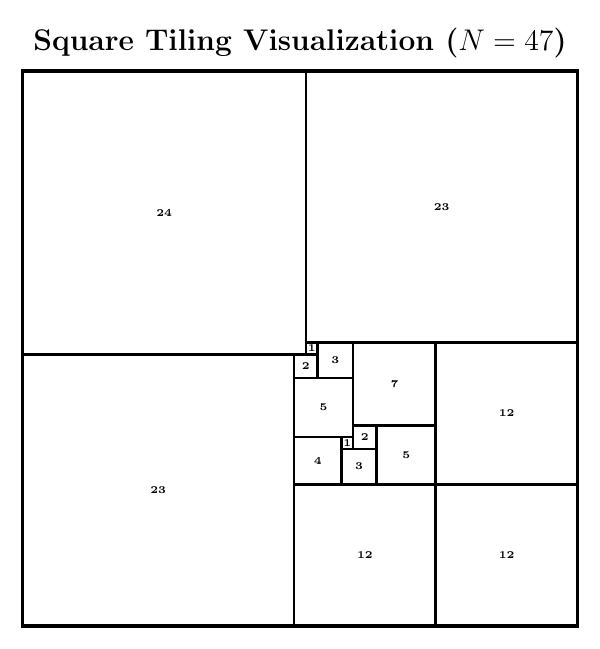}
\caption{Optimal tiling for $N=47$.}
    \label{fig:square47}
\end{figure}
\FloatBarrier

\begin{table}[htbp]
\centering
\caption{Placement coordinates for Figure~\ref{fig:square47}.}
\label{tab:placements47}
\begin{tabular}{rrr rrr}
\toprule
\multicolumn{3}{c}{Placement} & \multicolumn{3}{c}{Placement} \\
\cmidrule(lr){1-3} \cmidrule(lr){4-6}
Row $i$ & Column $j$ & Side $h$ & Row $i$ & Column $j$ & Side $h$ \\
\midrule
1  & 1  & 24 & 31 & 31 & 5 \\
1  & 25 & 23 & 32 & 24 & 4 \\
25 & 1  & 23 & 24 & 26 & 3\\
24 & 36 & 12 & 33 & 28 & 3 \\
36 & 36 & 12 & 25 & 24 & 2\\
36 & 24 & 12 & 31 & 29 & 2\\
24 & 29 & 7  & 24 & 25 & 1\\
27 & 24 & 5  & 32 & 28 & 1  \\

\bottomrule
\end{tabular}
\end{table}
\FloatBarrier

\bibliography{refs}

\FloatBarrier
\section*{Statements and Declarations}

\paragraph*{Funding}
The authors declare that no funds, grants, or other support were received during the preparation of this manuscript.

\paragraph*{Competing Interests}
The authors have no relevant financial or non-financial interests to disclose.

\paragraph*{Author Contributions}
Zhuo Yu: Software, computational experiments, and
writing--original draft. Yuan Wang: Methodology, supervision, and writing--review and editing. Zhuo Liu: computational experiments and data analysis. All authors read and approved the final manuscript.

\end{document}